\newcommand{\vertiii}[1]{{\left\vert\kern-0.25ex\left\vert\kern-0.25ex\left\vert #1 \right\vert\kern-0.25ex\right\vert\kern-0.25ex\right\vert}}
\DeclareMathOperator*{\argmin}{\text{argmin}}
\title{A three-field formulation of the Poisson problem with Nitsche approach}
\author[M.]{Ilyas}
\address{School of Mathematical and Physical Sciences, University of Newcastle, Callaghan, NSW 2308, \textsc{Australia}}
\author[B.P.]{Lamichhane}
\address{School of Mathematical and Physical Sciences, University of Newcastle, Callaghan, NSW 2308, \textsc{Australia}}
\keywords{Three-field formulation, Poisson problem, Nitsche approach}
\subjclass{65N30, 65N50}
\begin{document}
	
	\maketitle
	\abstract{We modify a three-field formulation of the Poisson problem with Nitsche approach for approximating Dirichlet boundary conditions. Nitsche approach allows us to weakly impose Dirichlet boundary condition but still preserves the optimal convergence. We use the biorthogonal system for efficient numerical computation and introduce a stabilisation term so that the problem is coercive on the whole space. Numerical examples are presented to verify the algebraic formulation of the problem.
	}
	\tableofcontents

\section{Introduction}

The finite element method is a powerful and efficient method to handle complicated geometries and impose the associated boundary conditions. However, in some cases, the treatment of the Dirichlet-type boundary conditions compromise the stability and accuracy of the standard finite element method \cite{JS09}.

In order to relax the Dirichlet boundary condition constraint, we need to modify the standard finite element approach. Generally, we can do this by imposing the Dirichlet boundary condition as a penalty term \cite{Aub72, Bab73}. One of such methods is Nitsche's method \cite{Nit71}, which imposes the Dirichlet boundary condition weakly in the formulation without the need of a Lagrange multiplier. Moreover, compared to other penalty method, Nitsche's method adds the consistency, symmetry and stability terms so that this method can achieve optimal convergence. There are so many applications of Nitsche's method in many areas, such as elasticity \cite{BBH09}, interface problems \cite{Han05}, potential flows \cite{JGS16} and plasticity \cite{Tru16}. 

In this article, we modify a mixed finite element method, based on the three-field formulation \cite{IL16}, with Nitsche approach to solve a Poisson problem. A similar three-field formulation, known as Hu-Washizu formulation, is popular in linear elasticity field \cite{LMR13}. The three-field formulation allows us to apply a biorthogonal system which leads to a very efficient finite element method. In order to overcome the difficulty of coercivity condition, we introduce a stabilisation term \cite{IL16} of the associated bilinear form so that it is coercive on the whole space.


The structure of the article is as follows. In the next section we recall the Nitsche formulation for Poisson problem and introduce a three-field formulation with this approach. We modify the three-field formulation to include a stabilisation term. We introduce the finite element approximation and prove the well-posedness condition in Section \ref{sec:fem}. We then show the algebraic formulation and a priori error estimate in Section \ref{sec:algest}. Two numerical examples are presented in Section \ref{sec:examples}. Finally, a short conclusion is written in Section \ref{sec:conclusion}.

\section{A Three-field Formulation for Poisson Problem}

\label{sec:formulation}

\subsection*{Sobolev Spaces}

Let $V=H^{1}\left( \Omega \right) $ and $L=\left[ L^{2}\left( \Omega \right)\right] ^{2}$. The Sobolev spaces $H^{k}\left( S\right) $ for $S\subset\Omega $ or $S\subset \Gamma $, and $k\geq 0$ are defined in the standard way \cite{Cia78}. We introduce the space $H^{-1/2}\left(\Gamma \right) $, the dual space of $H^{1/2}\left( \Omega \right) $, with the norm
\begin{equation*}
\left\Vert \mu \right\Vert _{-1/2,\Gamma }=\sup_{z\in H^{1/2}\left( \Gamma \right) }\frac{\left\langle \mu ,z\right\rangle }{\left\Vert z\right\Vert_{1/2,\Gamma }}
\end{equation*}
where $\left\langle \cdot ,\cdot \right\rangle $ denotes the duality pairing. For functions $v\in H^{1}\left( \Omega \right) $ with $\Delta v\in L^{2}\left( \Omega \right) $, it holds \cite{Bab73} $\frac{\partial v}{\partial n}\in H^{-1/2}\left( \Gamma \right) $ with
\begin{equation*}
\left\Vert \frac{\partial u}{\partial n}\right\Vert _{-1/2,\Gamma }\leq C\left( \left\Vert v\right\Vert _{1}+\left\Vert \Delta v\right\Vert_{0}\right) .
\end{equation*}

We will also introduce the mesh-dependent norms
\begin{eqnarray*}
\left\Vert v\right\Vert _{1/2,h}^{2} &=&\sum \frac{1}{h_{e}}\left\Vert v\right\Vert _{0,e}^{2}\text{ for }v\in H^{1}\left( \Omega \right),  \\
\left\Vert z\right\Vert _{-1/2,h}^{2} &=&\sum h_{e}\left\Vert z\right\Vert_{0,e}^{2}\text{ for }z\in L^{2}\left( \Gamma \right), 
\end{eqnarray*}%
and for these norms it holds
\begin{equation}
\left\langle v,z\right\rangle \leq \left\Vert v\right\Vert_{1/2,h}\left\Vert z\right\Vert _{-1/2,h}\text{ for }\left( v,z\right) \in H^{1}\left( \Omega \right) \times L^{2}\left( \Gamma \right).
\label{ineq:dualpair}
\end{equation}

For the rest of the article, we denote
\begin{equation*}
\left\Vert u\right\Vert _{1,h}=\left\Vert u\right\Vert _{1,\Omega}+\left\Vert u\right\Vert _{1/2,h}\text{ for }u\in H^{1}\left(\Omega\right).
\end{equation*}

\subsection*{Nitsche Formulation for the Poisson Problem}

The mixed formulation is obtained by introducing $\sigma =\nabla u$. Given $f\in L^{2}\left( \Omega \right) $, the (Nitsche) minimisation problem can be written as
\begin{equation}
\argmin_{\substack{ \left( u,\sigma \right) \in V\times L \\ \sigma=\nabla u}}\frac{1}{2}\left\Vert \sigma \right\Vert _{0,\Omega }^{2}+\frac{\alpha }{2}\left\Vert u-g_{D}\right\Vert _{1/2,h}^{2}-\left\langle \sigma\cdot \mathbf{n},v-g_{D}\right\rangle -\int_{\Omega }fu \,dx.  \label{eq:minmixed}
\end{equation}

We write a variational equation for $\sigma =\nabla u$ using the Lagrange multiplier space $M=L$ to obtain the saddle-point problem of the minimisation problem \eqref{eq:minmixed}.
The saddle point formulation is to find $\left( u,\sigma ,\varphi \right) \in V\times L\times M$ such that
\begin{equation}
\begin{array}{lcll}
\tilde{a}\left[ \left( u,\sigma \right) ,\left( v,\tau \right) \right] +b\left[ \left( v,\tau \right) ,\varphi \right]  &=&\ell \left( v,\tau\right) ,&\left( v,\tau \right) \in V\times L,  \notag \\
b\left[ \left( u,\sigma \right) ,\psi \right]  &=&0,&\psi \in M,
\end{array}
\label{eq:original}
\end{equation}
where
\begin{eqnarray*}
\tilde{a}\left[ \left( u,\sigma \right) ,\left( v,\tau \right) \right] &=&\int_{\Omega }\sigma \cdot \tau \,dx+\alpha \left\langle u,v\right\rangle_{1/2,h}-\left\langle \sigma \cdot \mathbf{n},v\right\rangle -\left\langle\tau \cdot \mathbf{n},u\right\rangle , \\
b\left[ \left( u,\sigma \right) ,\psi \right]  &=&\int_{\Omega }\left(\sigma -\nabla u\right) \psi \,dx, \\
\ell \left( v,\tau\right)&=&\int_{\Omega }fv\,dx-\left\langle \tau \cdot \mathbf{n},g_{D}\right\rangle +\alpha\left\langle g_{D},v\right\rangle _{1/2,h}.
\end{eqnarray*}%
where $\left\langle \cdot ,\cdot \right\rangle $ denotes duality pairing between $H^{1/2}\left( \Omega \right) $ and $H^{-1/2}\left( \Gamma \right) $.

\section{Finite Element Discretisation}
\label{sec:fem}

Let $\mathcal{T}_{h}$ be a quasi-uniform triangulation of the polygonal domain $\Omega $. We use the standard linear finite element space $V_{h}\subset H^{1}\left( \Omega \right) $ defined on the triangulation $\mathcal{T}_{h}$, where 
\begin{equation*}
V_{h}:=\{v\in C^{0}\left( \Omega \right) :v|_{T}\in \mathcal{P}_{1}\left(T\right) ,T\in \mathcal{T}_{h}\}.
\end{equation*}

The finite element space for the gradient of the solution is $L_{h}=\left[ V_{h}\right]^{2}$. Let $\{\rho _{1},\rho _{2},\dots ,\rho _{N}\}$ be the finite element basis for $V_{h}$. Starting with the standard basis for $V_{h}$, we construct a space $Q_{h}$ spanned by the basis $\{\mu _{1},\mu _{2},\dots,\mu _{N}\}$ so that the basis functions of $V_{h}$ and $Q_{h}$ satisfy the biorthogonality condition 
\begin{equation*}
\int_{\Omega }\rho _{i}\mu _{j}\,dx=c_{j}\delta _{ij},\quad c_{j}\neq 0,\quad 1\leq i,j,\leq N,
\end{equation*}
where $\delta _{ij}$ is the Kronecker symbol, and $c_{j}$ a scaling factor. Therefore, the sets of basis functions of $V_{h}$ and $Q_{h}$ form a biorthogonal system. The basis functions of $Q_{h}$ are constructed locally on a reference element $\hat{T}$ so that the basis functions of $V_{h}$ and $Q_{h}$ have the same support, and in each element the sum of all the basis functions of $Q_{h}$ is one \cite{LMR13}. We let $M_{h}=\left[ Q_{h}\right] ^{2}$, thus our problem is to find $\left( u_{h},\sigma _{h},\varphi_{h}\right) \in V_{h}\times L_{h}\times M_{h}$ such that 
\begin{equation}
\begin{array}{lcll}
\tilde{a}\left[ \left( u_{h},\sigma _{h}\right) ,\left( v_{h},\tau _{h}\right)\right] +b\left[ \left( v_{h},\tau _{h}\right) ,\varphi _{h}\right] &=&\ell\left( v_{h}, \tau_h\right) ,&\left( v_{h},\tau _{h}\right) \in V_{h}\times L_{h}, \\
b\left[ \left( u_{h},\sigma _{h}\right) ,\psi _{h}\right] &=&0,&\psi_{h}\in M_{h}.
\end{array}
\label{eq:algebraic}
\end{equation}
To show that the saddle-point problem has a unique solution, we need to show
that the following well-posedness conditions are satisfied.

\begin{enumerate}
\item The linear form $\ell \left( \cdot \right) $, the bilinear forms $%
\tilde{a}\left[ \cdot ,\cdot \right] $ and $b\left[ \cdot ,\cdot \right] $
are continuous on the spaces in which they are defined.

\item The bilinear form $\tilde{a}\left[ \cdot ,\cdot \right] $ is coercive
on the kernel space $K_h$ defined as%
\begin{equation*}
K_h=\left\{ \left( u_h,\sigma_h \right) \in V_h\times L_h:b\left[ \left( u_h,\sigma_h
\right) ,\psi_h \right] =0,~\text{for all }\psi_h \in M_h\right\} .
\end{equation*}

\item The bilinear form $b\left[ \cdot ,\cdot \right] $ satisfies the 
\textit{inf-sup }condition%
\begin{equation*}
\inf_{\psi_h \in M_h}\sup_{\left( v_h,\tau_h \right) \in V_h\times L_h}\frac{b\left[
\left( v_h,\tau_h \right) ,\psi_h \right] }{\left\Vert v_h,\tau_h \right\Vert
_{V_h\times L_h}\left\Vert \psi_h \right\Vert _{0,\Omega}}\geq \gamma ,~\gamma >0.
\end{equation*}
\end{enumerate}

The mesh-dependent norm for the product space $V_h\times L_h$ is defined by
\begin{equation*}
\left\Vert u_h,\sigma_h \right\Vert _{V_h\times L_h}^{2}=\left\Vert u_h\right\Vert _{1,h}^{2} 
+ \left\Vert \sigma_h\right\Vert _{0,\Omega }^{2}, \quad \left( u_h,\sigma_h \right) \in V_h\times L_h.
\end{equation*}


With the introduction of $M_h$, the bilinear form $\tilde{a}\left[ \cdot,\cdot \right] $ is not coercive on the kernel subspace $K_h\subset V_h\times L_h$. Thus, we need to modify the bilinear form $\tilde{a}\left[ \cdot,\cdot \right] $ so that it is coercive on the kernel space $K_h$ or even the whole space $V_h\times L_h$. In this article, we modify the bilinear form $\tilde{a}\left[ \cdot ,\cdot \right] $ by adding a stabilisation term so that it is coercive on the whole space $V_h\times L_h$ \cite{IL16}
\begin{eqnarray*}
a\left[ \left( u_h,\sigma_h \right) ,\left( v_h,\tau_h \right) \right]&=& r\int_{\Omega }\sigma_h \cdot \tau_h \,dx+\left( 1-r\right) \int_{\Omega }\nabla u_h\cdot \nabla v_h\,dx\\
& & +\alpha \left\langle u_h,v_h\right\rangle_{1/2,h}-\left\langle \sigma_h \cdot \mathbf{n},v_h\right\rangle -\left\langle\tau_h \cdot \mathbf{n},u_h\right\rangle,
\end{eqnarray*}
for $0<r<1$. 

We use the following inverse estimate result \cite{JS09} to show the continuity condition of $\ell \left( \cdot \right) $ and also continuity and coercivity condition of the bilinear form $a\left[ \cdot ,\cdot \right] $,
\begin{equation}
C_{I}\left\Vert \frac{\partial v_h}{\partial n}\right\Vert _{-1/2,h}\leq\left\Vert \nabla v_h\right\Vert _{0,\Omega }\text{ for }v_h\in V_{h}.
\label{ineq:inverse}
\end{equation}
The continuity of the linear form $\ell \left( \cdot \right) $, and the bilinear forms $a\left[ \cdot ,\cdot \right] $ and $b\left[ \cdot ,\cdot \right] $ then follows from the Cauchy-Schwarz inequality, the duality pairing \eqref{ineq:dualpair} and the inverse estimate \eqref{ineq:inverse}.

For the coercivity condition, using the inverse estimate \eqref{ineq:inverse} and the following Poincare-Friedrichs inequality,
\begin{equation*}
\left\Vert u_h\right\Vert _{1,\Omega }^{2}=\left\Vert u_h\right\Vert _{0,\Omega}^{2}+\left\Vert \nabla u_h\right\Vert _{0,\Omega }^{2}\leq \left(c^{2}+1\right) \left\Vert \nabla u_h\right\Vert _{0,\Omega }^{2},
\end{equation*}
we can write
\begin{equation}
\begin{aligned}
&\left\vert a\left[ \left( u_h,\sigma_h \right) ,\left( u_h,\sigma_h \right) \right]\right\vert&\\
&=r\left\Vert \sigma_h \right\Vert _{0,\Omega }^{2}+\left(1-r\right) \left\Vert \nabla u_h\right\Vert _{0,\Omega }^{2}+\alpha \left\Vert u_h\right\Vert _{1/2,h}^{2}-2\left\langle \sigma_h \cdot \mathbf{n},u_h\right\rangle , &\\
&\geq r\left\Vert \sigma_h \right\Vert _{0,\Omega }^{2}+\left( 1-r\right)\left\Vert \nabla u_h\right\Vert _{0,\Omega }^{2}-2\left\Vert \sigma_h \cdot \mathbf{n}\right\Vert _{-1/2,h }\left\Vert u_h\right\Vert _{1/2,h}+\alpha \left\Vert u_h\right\Vert _{1/2,h}^{2}, &\\
&\geq r\left\Vert \sigma_h \right\Vert _{0,\Omega }^{2}+\left( 1-r\right)\left\Vert \nabla u_h\right\Vert _{0,\Omega }^{2}-\left( \frac{1}{\varepsilon }\left\Vert \sigma_h \cdot \mathbf{n}\right\Vert _{-1/2,h}^{2}+\varepsilon \left\Vert u_h\right\Vert _{1/2,h }^{2}\right) +\alpha\left\Vert u_h\right\Vert _{1/2,h}^{2}, &\\
&\geq \left( r-\frac{1}{\varepsilon C_{I}}\right) \left\Vert \sigma_h\right\Vert _{0,\Omega }^{2}+\left( 1-r\right) \left\Vert \nabla u_h\right\Vert _{0,\Omega }^{2}+\left( \alpha -\varepsilon \right) \left\Vert u_h\right\Vert _{1/2,h}^{2}, &\\
&\geq \left( r-\frac{1}{\varepsilon C_{I}}\right) \left\Vert \sigma_h\right\Vert _{0,\Omega }^{2}+\frac{1-r}{c^{2}+1}\left\Vert u_h\right\Vert_{1,\Omega }^{2}+\left( \alpha -\varepsilon \right) \left\Vert u_h\right\Vert_{1/2,h}^{2}, &\\
&\geq C\left\Vert \left( u_h,\sigma_h \right) \right\Vert _{V_h\times L_h}^{2},&
\end{aligned}
\label{ineq:coercivity}
\end{equation}
where $C$ is the minimum of $\left( r-\frac{1}{\varepsilon C_{I}}\right)$, $\frac{1-r}{c^{2}+1}$ and $\left( \alpha -\varepsilon \right)$. We also require $\frac{1}{C_{I}}<r\varepsilon <\varepsilon<\alpha $ and $0<r<1$. From this point forward, we use constant $C$ as a mesh-independent generic constant. 

Now the \textit{inf-sup }condition for the bilinear form $b\left[ \cdot,\cdot \right] $ can be shown as in \cite{IL16}.
Thus we have proved the following theorem.

\begin{theorem}
\label{thm:solution}The saddle point problem \eqref{eq:algebraic} with stabilised $a\left[\cdot,\cdot\right]$ has a unique solution $\left( u_h,\sigma_h ,\varphi_h \right) \in V_h\times L_h\times M_h$. The solution also satisfies $\left\Vert \left( u_h,\sigma_h \right) \right\Vert _{V_h\times L_h} + \left\Vert \varphi_h \right\Vert_{0,\Omega} \leq C\left\Vert f \right\Vert_{0,\Omega}.$
\end{theorem}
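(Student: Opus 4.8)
The plan is to invoke the classical Babuska--Brezzi theory for saddle-point problems: once the three well-posedness conditions listed above hold, existence, uniqueness, and the a priori stability bound follow from the abstract theory (exactly as used in \cite{IL16}). Almost all the work has in fact already been carried out in the discussion preceding the theorem, so the proof is mainly a matter of assembling the pieces and checking that the constants are mesh-independent.

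First I would note that the coercivity estimate \eqref{ineq:coercivity} is established on the \emph{whole} space $V_h\times L_h$, hence a fortiori on the kernel $K_h$, with the constant $C=\min\left\{r-\tfrac{1}{\varepsilon C_I},\ \tfrac{1-r}{c^2+1},\ \alpha-\varepsilon\right\}>0$, which is mesh-independent provided the parameter constraints $\tfrac{1}{C_I}<r\varepsilon<\varepsilon<\alpha$ and $0<r<1$ are met. Combined with the continuity of $a\left[\cdot,\cdot\right]$, $b\left[\cdot,\cdot\right]$ and $\ell\left(\cdot\right)$ --- which comes from Cauchy--Schwarz, the duality estimate \eqref{ineq:dualpair} and the inverse estimate \eqref{ineq:inverse} --- and the inf-sup condition for $b\left[\cdot,\cdot\right]$ with mesh-independent constant $\gamma>0$ (established as in \cite{IL16}), the Brezzi conditions are satisfied, so \eqref{eq:algebraic} has a unique solution $\left(u_h,\sigma_h,\varphi_h\right)$.

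For the stability bound I would argue in two steps. The second equation of \eqref{eq:algebraic} says $\left(u_h,\sigma_h\right)\in K_h$, so taking $\left(v_h,\tau_h\right)=\left(u_h,\sigma_h\right)$ in the first equation kills the term $b\left[\left(u_h,\sigma_h\right),\varphi_h\right]$ and leaves $a\left[\left(u_h,\sigma_h\right),\left(u_h,\sigma_h\right)\right]=\ell\left(u_h,\sigma_h\right)$; coercivity \eqref{ineq:coercivity} and continuity of $\ell$ then give $\left\Vert\left(u_h,\sigma_h\right)\right\Vert_{V_h\times L_h}\leq C\left\Vert\ell\right\Vert$. For $\varphi_h$ I would use the inf-sup condition: since $b\left[\left(v_h,\tau_h\right),\varphi_h\right]=\ell\left(v_h,\tau_h\right)-a\left[\left(u_h,\sigma_h\right),\left(v_h,\tau_h\right)\right]$ for all $\left(v_h,\tau_h\right)$,
\begin{equation*}
\gamma\left\Vert\varphi_h\right\Vert_{0,\Omega}\leq\sup_{\left(v_h,\tau_h\right)\in V_h\times L_h}\frac{\ell\left(v_h,\tau_h\right)-a\left[\left(u_h,\sigma_h\right),\left(v_h,\tau_h\right)\right]}{\left\Vert v_h,\tau_h\right\Vert_{V_h\times L_h}}\leq\left\Vert\ell\right\Vert+C\left\Vert\left(u_h,\sigma_h\right)\right\Vert_{V_h\times L_h}\leq C\left\Vert\ell\right\Vert .
\end{equation*}
Finally, continuity of $\ell$ bounds $\left\Vert\ell\right\Vert$ by $\left\Vert f\right\Vert_{0,\Omega}$ (for homogeneous Dirichlet data $g_D$; in the inhomogeneous case the right-hand side additionally picks up the norm of $g_D$), which yields the asserted estimate.

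I expect the real obstacle to be bookkeeping rather than depth: one must confirm that $C_I$ in \eqref{ineq:inverse} is independent of $h$ (this is where quasi-uniformity of $\mathcal{T}_h$ enters), that the Poincare--Friedrichs constant $c$ and an admissible choice of $r$, $\varepsilon$, $\alpha$ exist independently of $h$ (a standard ``$\alpha$ large enough'' Nitsche requirement), and --- the most delicate point --- that the inf-sup constant $\gamma$ imported from \cite{IL16} is genuinely uniform in $h$ \emph{with respect to the norm $\left\Vert\cdot\right\Vert_{V_h\times L_h}$ used here}. The latter needs care because this norm contains the $\left\Vert\cdot\right\Vert_{1/2,h}$ contribution, which does not appear in $b\left[\cdot,\cdot\right]$, so the test pair realising the supremum in the inf-sup argument must be shown to be controlled in the full product norm, not merely in $\left\Vert\sigma_h\right\Vert_{0,\Omega}+\left\Vert\nabla u_h\right\Vert_{0,\Omega}$.
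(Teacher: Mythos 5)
Your proposal is correct and follows essentially the same route as the paper, which likewise just assembles the three Brezzi conditions (continuity, coercivity on the whole space from the stabilised form \eqref{ineq:coercivity}, and the inf-sup condition imported from \cite{IL16}) and concludes by the standard saddle-point theory; the paper does not even write out the stability bound, so your explicit derivation of $\left\Vert \left( u_h,\sigma_h \right) \right\Vert _{V_h\times L_h} + \left\Vert \varphi_h \right\Vert_{0,\Omega} \leq C\left\Vert \ell\right\Vert$ is, if anything, more complete. Your closing caveats (mesh-independence of $C_I$ and $\gamma$, and the fact that the stated bound by $\left\Vert f\right\Vert_{0,\Omega}$ alone tacitly ignores the $g_D$ contribution to $\ell$) are legitimate points that the paper glosses over.
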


\section{Algebraic Formulation}

\label{sec:algest}

In order to present an algebraic formulation of the problem, we use $\left(x_{u},x_{\sigma },x_{\varphi }\right) $ for the vector representation of thesolution $\left( u_{h},\sigma _{h},\varphi _{h}\right) $ as elements in $V_{h}\times L_{h}\times M_{h}$. Let $S$, $D$, $A$, $B$, $C$ and $M$ be the matrices associated with bilinear forms $\int_{\Omega }\nabla u_{h}\cdot \nabla v_{h}\,dx,~\int_{\Omega }\tau _{h}\cdot \varphi _{h}\,dx,~\int_{\Gamma}\left( \sigma _{h}\cdot \mathbf{n}\right) u_{h}~ds,$ $\int_{\Omega}\nabla v_{h}\cdot\varphi _{h}\,dx,~\sum \frac{1}{h_{e}}\int_{e}u_{h}v_{h}~ds $ and $\int_{\Omega }\sigma _{h}\cdot \tau _{h}\,dx$, respectively. For the right hand side, we write $\mathbf{f}_1$ and $\mathbf{f}_2$ to represent the discrete forms of $\int_{\Omega }f\,v_h\,dx\,v_h\,ds +\alpha \left\langle g_{D},v_h\right\rangle _{1/2,h}$ and $\left\langle \tau_h \cdot \mathbf{n},g_{D}\right\rangle$, respectively. Then the algebraic formulation of the problem is 
\begin{equation}
\left[ 
\begin{array}{ccc}
\left( 1-r\right) S+\alpha C & -A & -B \\ 
-A^{T} & rM & D \\ 
-B^{T} & D & 0
\end{array}%
\right] \left[ 
\begin{array}{c}
x_{u} \\ 
x_{\sigma } \\ 
x_{\varphi }
\end{array}
\right] =\left[ 
\begin{array}{c}
\mathbf{f}_1 \\ 
-\mathbf{f}_2 \\ 
\mathbf{0}
\end{array}
\right] ,  
\label{matrix1st}
\end{equation}%
where the first two equations of \eqref{matrix1st} correspond to first equation of \eqref{eq:algebraic} with stabilised $a\left[\cdot,\cdot\right]$, by setting $\sigma _{h}=0$ and $v_{h}=0$, respectively. After statically condensing out degrees of freedom associated with $\sigma_h$ and $\phi_h$ in \eqref{matrix1st}, we arrive at the following system
\[
Kx_u = F
\]
where
\begin{eqnarray*}
K & = & \left(1-r\right)S + \alpha C - AD^{-1}B^{T} - BD^{-1}A^{T} + rBD^{-1}MD^{-1}B^{T},\\
F & = & \mathbf{f}_1 - BD\mathbf{f}_2.
\end{eqnarray*}
Due to the choice of a biorthogonal system, matrix $D$ is diagonal. As a result, the statically condensed system matrix is sparse.

We introduce two projections $P_h:L^2\left(\Omega\right) \rightarrow Q_h$ and $P^{\ast}_h:L^2(\Omega) \rightarrow V_h$ as follows for $v \in L^2(\Omega)$.
\begin{equation}
\int\left(P_h v-v\right)\cdot\mu_{h}\,dx=0,\quad\mu_{h}\in Q_{h},\\
\int\left(P^{\ast}_h v-v\right)\cdot\varphi_{h}\,dx=0,\quad\varphi_{h}\in V_{h}.\\
\end{equation}

They satisfy the following estimates for $u \in H^1(\Omega)$ :
\begin{equation}
\Vert P_h u - u \Vert_{0,\Omega} \leq Ch \Vert u \Vert_{1,\Omega}, \quad 
\Vert P^{\ast}_h u - u \Vert_{0,\Omega} \leq Ch \Vert u \Vert_{1,\Omega}. 
\label{ineq:Pestimate}
\end{equation}
Using this projection, our problem is to find $u_{h}\in V_{h}$ such that,
\begin{equation}
A\left(u_{h},v_{h}\right)=L\left(v_{h}\right),~v_{h}\in V_{h}
\label{eq:condensed}
\end{equation}
where
\begin{equation}
\begin{aligned}
A\left(u_{h},v_{h}\right) & =\int_{\Omega}P_h\left(\nabla u_{h}\right)\cdot P_h\left(\nabla v_{h}\right)\,dx+\alpha\left\langle u_{h},v_{h}\right\rangle _{1/2,h}\\
&\quad -\int_{\Gamma}\left(P_h\left(\nabla u_{h}\right)\cdot\mathbf{n}\right)v_{h}~ds-\int_{\Gamma}\left(P_h\left(\nabla v_{h}\right)\cdot\mathbf{n}\right)u_{h}~ds,\\
L\left(v_{h}\right)  & =\int_{\Omega}fv_{h}\,dx-\int_{\Gamma}\left(P_h\left(\nabla v_{h}\right)\cdot n\right)g_{D}~ds+\alpha\left\langle g_{D},v_{h}\right\rangle _{1/2,h}.
\end{aligned}
\end{equation}
We also introduce two mesh-dependent norms
\begin{equation*}
\begin{array}{ll}
\left\Vert u_{h}\right\Vert _{h}^{2}  =\left\Vert u_{h}\right\Vert_{1,h}^{2}+\left\Vert P_{h}\left(\nabla u_{h}\right)\right\Vert_{0,\Omega}^{2}, & u_h \in V_h,\\
\vertiii{u}_{h}^{2}  =\left\Vert u\right\Vert _{1,h}^{2}+\left\Vert P_{h}\left(\nabla u\right)\right\Vert_{0,\Omega}^{2}+\left\Vert \nabla u\cdot n\right\Vert _{-1/2,h}^{2}, & u \in H^2\left(\Omega\right),
\end{array}
\end{equation*}
so that
\begin{equation}
\left\vert A\left(u,v_{h}\right)\right\vert \leq\left\vert \left\Vert u\right\Vert \right\vert _{h}\left\Vert v_{h}\right\Vert _{h},\quad u\in V\text{ and }v_{h}\in V_{h}.
\label{ineq:continuity}
\end{equation}
We get the following estimate combining the interpolation estimate of Lemma 3.4 of \cite{JS09}.
\begin{equation}\label{lemma1}
\inf_{v_h \in V_{h}}\vertiii{u-v_h}_{h}\leq
Ch\left\Vert u \right\Vert _{2,\Omega}.
\end{equation}
We then have the following theorem.

\begin{theorem}
\label{thm:apriori}Let $u_{h}\in V_{h}$ be the solution to the problem \eqref{eq:condensed}. Suppose that $u\in H^{2}\left( \Omega \right) $ is the solution to the problem \eqref{eq:original} then
\begin{equation*}
\left\Vert u-u_{h}\right\Vert _{h}\leq Ch\left\Vert u \right\Vert _{2,\Omega}.
\end{equation*}
\end{theorem}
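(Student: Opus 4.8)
The plan is to run the standard Strang-type argument for a consistent nonconforming (here: perturbed/stabilised) discretisation, exploiting the $h$-norm continuity bound \eqref{ineq:continuity}, the coercivity already established in \eqref{ineq:coercivity} (transferred to the condensed form $A(\cdot,\cdot)$ on $V_h$), and the interpolation estimate \eqref{lemma1}. First I would record the Galerkin-type orthogonality. The continuous solution $u\in H^2(\Omega)$ of \eqref{eq:original} satisfies, after eliminating $\sigma=\nabla u$ and $\varphi$, the identity $A(u,v_h)=L(v_h)$ for all $v_h\in V_h$ \emph{up to} the consistency terms coming from replacing $\nabla u$ by $P_h(\nabla u)$; since $u$ solves the Poisson problem with $\Delta u=-f$ and $u=g_D$ on $\Gamma$, the boundary integrals against $v_h$ and the volume term reproduce $L(v_h)$, and the $P_h$-projection is self-adjoint on $L^2$, so in fact $A(u,v_h)=L(v_h)=A(u_h,v_h)$ for all $v_h\in V_h$, i.e. $A(u-u_h,v_h)=0$.

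Next I would split the error via an arbitrary $w_h\in V_h$: write $u-u_h=(u-w_h)+(w_h-u_h)$ and set $e_h:=w_h-u_h\in V_h$. Coercivity of $A$ on $V_h$ gives $C\|e_h\|_h^2\le A(e_h,e_h)=A(w_h-u,e_h)+A(u-u_h,e_h)$, and the second term vanishes by the orthogonality from the previous step. For the first term I would apply the continuity bound \eqref{ineq:continuity}, which is valid for arguments in $V$ and $V_h$ respectively, to get $A(w_h-u,e_h)\le\vertiii{w_h-u}_h\|e_h\|_h$; note I need the triple-bar norm on the first slot, which is why \eqref{ineq:continuity} and \eqref{lemma1} are phrased with $\vertiii{\cdot}_h$. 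Dividing by $\|e_h\|_h$ yields $\|e_h\|_h\le C\vertiii{u-w_h}_h$, and then the triangle inequality $\|u-u_h\|_h\le\|u-w_h\|_h+\|e_h\|_h\le C\vertiii{u-w_h}_h$ (absorbing $\|u-w_h\|_h\le\vertiii{u-w_h}_h$, since the triple-bar norm dominates $\|\cdot\|_h$). Taking the infimum over $w_h\in V_h$ and invoking \eqref{lemma1} gives $\|u-u_h\|_h\le Ch\|u\|_{2,\Omega}$.

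The step I expect to be the main obstacle is the consistency/orthogonality identity: one must check carefully that the condensed form $A(\cdot,\cdot)$ in \eqref{eq:condensed}, built from the projected gradients $P_h(\nabla\cdot)$, is genuinely consistent with the continuous solution, i.e. that $A(u,v_h)=L(v_h)$ holds exactly for $u\in H^2(\Omega)$. This requires integrating $\int_\Omega P_h(\nabla u)\cdot P_h(\nabla v_h)\,dx=\int_\Omega P_h(\nabla u)\cdot\nabla v_h\,dx=\int_\Omega\nabla u\cdot\nabla v_h\,dx$ (using that $\nabla v_h\in L_h\subset$ range where the biorthogonal projection acts as identity after pairing, together with self-adjointness of $P_h$), then integrating by parts $\int_\Omega\nabla u\cdot\nabla v_h\,dx=-\int_\Omega\Delta u\,v_h\,dx+\int_\Gamma(\nabla u\cdot\mathbf n)v_h\,ds=\int_\Omega f v_h\,dx+\int_\Gamma(\nabla u\cdot\mathbf n)v_h\,ds$, and finally rewriting $\int_\Gamma(\nabla u\cdot\mathbf n)v_h\,ds=\int_\Gamma(P_h(\nabla u)\cdot\mathbf n)v_h\,ds$ up to a term controlled by $\|\nabla u\cdot\mathbf n-P_h(\nabla u)\cdot\mathbf n\|_{-1/2,h}$ — which is exactly the extra term present in $\vertiii{\cdot}_h$ but not in $\|\cdot\|_h$, so it must be carried through as a consistency error of the right order rather than assumed to vanish. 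Once that bookkeeping is done, the remaining estimates are routine applications of Cauchy–Schwarz, \eqref{ineq:dualpair}, and \eqref{ineq:inverse} exactly as in the continuity proof preceding \eqref{ineq:continuity}.
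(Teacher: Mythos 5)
Your overall scaffolding (coercivity of $A$ on $V_h$, continuity \eqref{ineq:continuity}, best approximation \eqref{lemma1}, triangle inequality) matches the paper's, but there is a genuine gap at the centre of your argument: the Galerkin orthogonality $A(u-u_h,v_h)=0$ that you assert in your first paragraph, and then use in your second paragraph to discard the term $A(u-u_h,e_h)$, is false. The chain $\int_\Omega P_h(\nabla u)\cdot P_h(\nabla v_h)\,dx=\int_\Omega P_h(\nabla u)\cdot\nabla v_h\,dx=\int_\Omega\nabla u\cdot\nabla v_h\,dx$ does not hold: for linear elements $\nabla v_h$ is piecewise constant and discontinuous, hence not reproduced by $P_h$, and $\nabla u$ for $u\in H^2(\Omega)$ is likewise not fixed by the projection; moreover $P_h$ and $P_h^{\ast}$ are a pair of mutually adjoint oblique (biorthogonal) projections, not a single self-adjoint one. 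Similarly $\int_\Gamma(P_h(\nabla u)\cdot\mathbf n)v_h\,ds\neq\int_\Gamma(\nabla u\cdot\mathbf n)v_h\,ds$. So the scheme is only asymptotically consistent, and you cannot have both "$A(u,v_h)=L(v_h)$ exactly" and "the consistency error must be carried through" — your third paragraph correctly diagnoses the problem that your second paragraph ignores.

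The repair is exactly what the paper does: do not drop the consistency term, but invoke Strang's second lemma to get
\begin{equation*}
\left\Vert u-u_{h}\right\Vert _{h}\leq C\left(\inf_{v_h\in V_h}\vertiii{u-v_{h}}_{h}+\sup_{w_{h}\in V_h}\frac{L\left(w_{h}\right)-A\left(u,w_{h}\right)}{\left\Vert w_{h}\right\Vert _{h}}\right),
\end{equation*}
and then spend the bulk of the proof on the residual $L(w_h)-A(u,w_h)$. Using $f=-\Delta u$ and integration by parts, this residual splits into three nonzero terms: $\int_\Omega\nabla u\cdot(\nabla w_h-P_h(\nabla w_h))\,dx$ (handled by inserting $P_h^{\ast}(\nabla u)$ and using \eqref{ineq:Pestimate}), $\int_\Omega(\nabla u-P_h(\nabla u))\cdot P_h(\nabla w_h)\,dx$ (again \eqref{ineq:Pestimate}), and the boundary term $\int_\Gamma((P_h(\nabla u)-\nabla u)\cdot\mathbf n)w_h\,ds$, which needs the separate estimate $\Vert(P_h(\nabla u)-\nabla u)\cdot\mathbf n\Vert_{-1/2,h}\leq Ch\Vert u\Vert_{2,\Omega}$ from \cite{LW04} together with \eqref{ineq:dualpair}. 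Each piece is $O(h)\Vert u\Vert_{2,\Omega}\Vert w_h\Vert_h$, which combined with \eqref{lemma1} gives the claimed rate. As written, your proposal identifies where the work lies but does not do it, and the clean orthogonality-based shortcut it offers in its place is not available.
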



\begin{proof}
	From the coercivity \eqref{ineq:coercivity} and continuity condition \eqref{ineq:continuity},
	\begin{equation*}
	\begin{aligned}
	\alpha\left\Vert u_{h}-v_{h}\right\Vert _{h}^{2}  & \leq A\left(u_{h}-v_{h},u_{h}-v_{h}\right),\\
	& =A\left(u-v_{h},u_{h}-v_{h}\right)-A\left(u,u_{h}-v_{h}\right)+A\left(u_{h},u_{h}-v_{h}\right),\\
	& =A\left(u-v_{h},u_{h}-v_{h}\right)-A\left(u,u_{h}-v_{h}\right)+L\left(u_{h}-v_{h}\right),\\
	& \leq\left\vert \left\Vert u-v_{h}\right\Vert \right\vert _{h}\left\Vert u_{h}-v_{h}\right\Vert _{h}+L\left(u_{h}-v_{h}\right)-A\left(u,u_{h}-v_{h}\right).
	\end{aligned}
	\end{equation*}
	Using $w_{h}=u_{h}-v_{h}$ and divide both sides by $\left\Vert w_{h}\right\Vert_{h}$, we get
	\begin{equation*}
	\alpha\left\Vert u_{h}-v_{h}\right\Vert _{h}\leq\vertiii{u-v_{h}}_{h}+\frac{L\left(w_{h}\right)-A\left(u,w_{h}\right)}{\left\Vert w_{h}\right\Vert _{h}}.
	\end{equation*}
	Following exactly as in the proof of Strang's second lemma \cite{Bra07} we get
	\begin{equation}
	\left\Vert u-u_{h}\right\Vert _{h} \leq C\left(\inf_{v_h \in V_h}\vertiii{u-v_{h}}_{h}+\sup_{w_{h} \in V_h}\frac{L\left(w_{h}\right)-A\left(u,w_{h}\right)}{\left\Vert w_{h}\right\Vert_{h}}\right).
	\label{ineq:Strang}
	\end{equation}
	The first term on the right hand side of \eqref{ineq:Strang} can be estimated using \eqref{lemma1}. For the second term of \eqref{ineq:Strang}, recall that $f=-\Delta u$ so we can write the numerator as follows.
	\begin{equation}
	\begin{aligned}
	& \int_{\Omega}	fw_{h}\,dx-\int_{\Omega}P_{h}\left(\nabla u\right)\cdot P_h\left(\nabla w_{h}\right)\,dx+\int_{\Gamma}\left(P_h\left(\nabla u\right)	\cdot\mathbf{n}\right)w_{h}~ds\\
	& =-\int_{\Omega}\Delta uw_{h}\,dx-\int_{\Omega}P_h\left(\nabla u\right)\cdot P_h\left(\nabla w_{h}\right)\,dx+\int_{\Gamma}\left(P_h\left(\nabla u\right)\cdot\mathbf{n}\right)w_{h}~ds\\
	& =\int_{\Omega}\nabla u\cdot\nabla w_{h}\,dx-\int_{\Gamma}\left(\nabla u\cdot\mathbf{n}\right)w_{h}~ds\\
	& \quad -\int_{\Omega}P_h\left(\nabla u\right)\cdot P_h\left(\nabla w_{h}\right)\,dx+\int_{\Gamma}\left(P_h\left(\nabla u\right)\cdot\mathbf{n}\right)w_{h}~ds\\
	& =\int_{\Omega}\nabla u\cdot\nabla w_{h}\,dx-\int_{\Omega}P_h\left(\nabla u\right)\cdot P_h\left(\nabla w_{h}\right)\,dx\\
	& \quad +\int_{\Gamma}\left(P_h\left(\nabla u\right)\cdot\mathbf{n}-\nabla u\cdot\mathbf{n}\right)w_{h}~ds\\
	& =\int_{\Omega}\nabla u\cdot\nabla w_{h}\,dx-\int_{\Omega}\nabla u\cdot P_h\left(\nabla w_{h}\right)\,dx+\int_{\Omega}\nabla u\cdot P_h\left(\nabla w_{h}\right)\,dx\\
	& \quad -\int_{\Omega}P_h\left(\nabla u\right)\cdot P_h\left(\nabla	w_{h}\right)\,dx+\int_{\Gamma}\left(P_h\left(\nabla u\right)\cdot\mathbf{n}-\nabla u\cdot\mathbf{n}\right)w_{h}~ds\\
	& =\int_{\Omega}\nabla u\cdot\left(\nabla w_{h}-P_h\left(\nabla w_{h}\right)\right)\,dx+\int_{\Omega}\left(\nabla u-P_h\left(\nabla u\right)\right)\cdot P_h\left(\nabla w_{h}\right)\,dx\\
	& \quad +\int_{\Gamma}\left(P_h\left(\nabla u\right)\cdot\mathbf{n}-\nabla u\cdot\mathbf{n}\right)w_{h}~ds
	\end{aligned}
	\label{eq:Strang2}
	\end{equation}
	
	We can estimate the first term of \eqref{eq:Strang2} using approximation property of $P_h^{\ast}$ \eqref{ineq:Pestimate} as
	\begin{equation*}
	\begin{aligned}
	\int_{\Omega}\nabla u\cdot\left(\nabla w_{h}-P_h\left(\nabla w_{h}\right)\right)\,dx  & =\int_{\Omega}\left(\nabla u-P_h^{\ast}\left(\nabla u\right)\right)\cdot\left(\nabla w_{h}-P_h\left(\nabla w_{h}\right)\right)\,dx,\\
	& \leq\left\Vert \nabla u-P_h^{\ast}\left(\nabla u\right)\right\Vert_{0,\Omega}\left\Vert \nabla w_{h}-P_h\left(\nabla w_{h}\right)\right\Vert _{0,\Omega},\\
	& \leq Ch\left\Vert u\right\Vert _{2,\Omega}\left\Vert \nabla w_{h}-P_h\left(\nabla w_{h}\right)\right\Vert _{0,\Omega}.
	\end{aligned}
	\end{equation*}
	We can estimate the second term of \eqref{eq:Strang2} using approximation property of $P_h$ \eqref{ineq:Pestimate} as
	\begin{equation*}
	\begin{aligned}
	\int_{\Omega}\left(\nabla u-P_h\left(\nabla u\right)\right)\cdot P_h\left(\nabla w_{h}\right)\,dx  & \leq\left\Vert \nabla u-P_h\left(	\nabla u\right)\right\Vert _{0,\Omega}\left\Vert P_h\left(\nabla	w_{h}\right)\right\Vert _{0,\Omega},\\
	& \leq Ch\left\Vert u\right\Vert _{2,\Omega}\left\Vert P_h\left(\nabla w_{h}\right)\right\Vert _{0,\Omega},
	\end{aligned}
	\end{equation*}
	We can estimate the third term of \eqref{eq:Strang2} as
	\begin{equation*}
	\begin{aligned}
	\int_{\Gamma}\left(P_h\left(\nabla u\right)\cdot n-\nabla u\cdot n\right)w_{h}~ds  & \leq\left\Vert P_h\left(\nabla u\right)\cdot\mathbf{n}-\nabla u\cdot\mathbf{n}\right\Vert _{-1/2,h}\left\Vert	w_{h}\right\Vert _{1/2,h},\\
	& \leq Ch\left\Vert u\right\Vert _{2,\Omega}\left\Vert w_{h}\right\Vert_{1/2,h},
	\end{aligned}
	\end{equation*}
	where 
	\[
	\left\Vert P_h\left(\nabla u\right)\cdot\mathbf{n}-\nabla u\cdot\mathbf{n}\right\Vert _{-1/2,h} \leq Ch\left\Vert u\right\Vert _{2,\Omega},
	\]
	follows from the approximation property of $P_h$ \cite{LW04}.
	Combining all estimates concludes the proof.
\end{proof}

\section{Numerical Examples}

\label{sec:examples}

In this section, we show two numerical examples to verify the convergence rate of our approach. We compute the error in $L^{2}$-norm and the rate of convergence for $u$ and $\sigma $. We also compute the error in $H^{1}$-norm and the rate of convergence for $u$. We will use pure Dirichlet boundary conditions for all our examples.

\subsection*{Example 1}

We consider the exact solution
\begin{equation*}
u=xy\left( 1-x\right) \left( 1-y\right) ,
\end{equation*}%
for the first example. The errors for this example with pure Dirichlet boundary conditions are shown in Table \ref{tab:ex1}.
\begin{table}[htb!]
	\centering
	\caption{Discretisation errors with pure Dirichlet boundary conditions for example 1}
	\begin{tabular}{|c|cc|cc|cc|}
		\hline
		\multirow{2}{*}{elem} & \multicolumn{2}{c|}{$\left\Vert u - u_h\right\Vert_{0,\Omega}$} & \multicolumn{2}{c|}{$\left\Vert u - u_h\right\Vert_{1,h}$} & \multicolumn{2}{c|}{$\left\Vert \sigma - \sigma_h\right\Vert_{0,\Omega}$} \\ \hhline{~------}
		& error & rate & error & rate & error & rate \\\hline
    8 & 3.74e-02 &  & 1.98e-01 &  & 1.73e-01 & \\\hline
   32 & 8.89e-03 & 2.0742 & 1.09e-01 & 0.8654 & 5.94e-02 & 1.5444 \\\hline
  128 & 1.92e-03 & 2.2083 & 5.53e-02 & 0.9754 & 1.81e-02 & 1.7175 \\\hline
  512 & 4.37e-04 & 2.1364 & 2.76e-02 & 1.0035 & 5.68e-03 & 1.6702 \\\hline
 2048 & 1.04e-04 & 2.0752 & 1.37e-02 & 1.0062 & 1.87e-03 & 1.6056 \\\hline
 8192 & 2.52e-05 & 2.0392 & 6.85e-03 & 1.0042 & 6.33e-04 & 1.5590 \\\hline
	\end{tabular}
	\label{tab:ex1}
\end{table}

\subsection*{Example 2}

We consider the exact solution%
\begin{equation*}
u=e^{x^{2}+y^{2}}+y^{2}\cos \left( xy\right) +x^{2}\sin \left( xy\right) ,
\end{equation*}%
for our second example. The errors for this example with pure Dirichlet boundary conditions are shown in Table \ref{tab:ex2}.
\begin{table}[htb!]
	\centering
	\caption{Discretisation errors with pure Dirichlet boundary conditions for example 2}
	\begin{tabular}{|c|cc|cc|cc|}
		\hline
		\multirow{2}{*}{elem} & \multicolumn{2}{c|}{$\left\Vert u - u_h\right\Vert_{0,\Omega}$} & \multicolumn{2}{c|}{$\left\Vert u - u_h\right\Vert_{1,h}$} & \multicolumn{2}{c|}{$\left\Vert \sigma - \sigma_h\right\Vert_{0,\Omega}$}\\ \hhline{~------}
		& error & rate & error & rate & error & rate \\\hline
    8 & 7.36e-01 &  & 4.23e+00 &  & 2.32e+00 & \\\hline
   32 & 1.50e-01 & 2.2902 & 2.10e+00 & 1.0110 & 8.56e-01 & 1.4381 \\\hline
  128 & 3.12e-02 & 2.2694 & 1.03e+00 & 1.0258 & 2.93e-01 & 1.5457 \\\hline
  512 & 6.83e-03 & 2.1915 & 5.07e-01 & 1.0231 & 1.00e-01 & 1.5494 \\\hline
 2048 & 1.57e-03 & 2.1179 & 2.51e-01 & 1.0149 & 3.45e-02 & 1.5384 \\\hline
 8192 & 3.76e-04 & 2.0661 & 1.25e-01 & 1.0085 & 1.20e-02 & 1.5263 \\\hline
	\end{tabular}
	\label{tab:ex2}
\end{table}

From Tables \ref{tab:ex1} and \ref{tab:ex2}, we can see that the rate of convergence of errors for $u$ in $L^2$-norm and $\left(1,h\right)$-norm is 2 and 1, respectively, while the rate of convergence of errors for $\sigma$ in $L^2$-norm is 1.5. These results are very similar to the result from the three-field formulation for Poisson problem with same examples. 

\section{Conclusion}

\label{sec:conclusion}

In this article, we describe a mixed finite element method to solve Poisson equation based on Nitsche's method. We add a stabilisation term so that our bilinear form is coercive on the whole space. From numerical examples, we can observe that the error and rate of convergence is very similar to our previous three-field formulation for Poisson problem. Thus we can conclude that this approach works well as an alternative to the standard formulation. 

\section*{Acknowledgment}
The first author is supported by UNIPRS and UNRSC50:50 scholarship for this research. He would also thank University of Newcastle HDR Funding 2017 and ANZIAM Student Support Scheme for the conference support.


\end{document}